\definecolor{webgreen}{rgb}{0,.5,0}
\definecolor{webbrown}{rgb}{.6,0,0}
\def\C{{\mathbb{C}}}
\def\N{{\mathbb{N}}}
\def\Z{{\mathbb{Z}}}
\def\1{{\bf 1}}
\def\RE{\operatorname{Re}}
\def\id{\operatorname{id}}
\def\lcm{\operatorname{lcm}}
\def\ds{\displaystyle}
\newtheorem{theorem}{Theorem}
\newtheorem{cor}[theorem]{Corollary}
\newtheorem{appl}[theorem]{Application}
\newtheorem{prop}[theorem]{Proposition}
\begin{document}

\title{Some remarks on a paper of V.~A.~Liskovets}
\author{L\'aszl\'o T\'oth \footnote{The author gratefully acknowledges support from the
Austrian Science Fund (FWF) under the project Nr. P20847-N18.}
 \\ \\ Department of Mathematics, University of P\'ecs \\ Ifj\'us\'ag u. 6, H-7624 P\'ecs,
Hungary \\ and
\\ Institute of Mathematics, Department of Integrative Biology \\
Universit\"at f\"ur Bodenkultur, Gregor Mendel-Stra{\ss}e 33, A-1180
Wien, Austria \\ E-mail: ltoth@gamma.ttk.pte.hu}
\date{}
\maketitle

\begin{abstract} We deduce new properties of the orbicyclic function
$E$ of several variables investigated in a recent paper by
V.~A.~Liskovets. We point out that the function $E$ and its
connection to the number of solutions of certain linear congruences
occur in the literature in a slightly different form. We investigate
another similar function considered by Deitmar, Koyama and Kurokawa
by studying analytic properties of some zeta functions of Igusa
type. Simple number theoretic proofs for some known properties are
also given.
\end{abstract}

\section{Introduction}

In a recent paper Liskovets \cite{Lis2010} investigated arithmetical
properties of the function
\begin{equation} \label{def_func_E}
E(m_1,\ldots,m_r):= \frac1{M}\sum_{k=1}^M
c_{m_1}(k)\cdots c_{m_r}(k),
\end{equation}
where $m_1,\ldots,m_r,M \in \N:=\{1,2,\ldots\}$ ($r\in \N$),
$m:=\lcm[m_1,\ldots,m_r]$, $m\mid M$ and $c_n(k)$ is the Ramanujan
sum defined as the sum of $k$-th powers of the primitive $n$-th
roots of unity ($k,n\in \N$), i.e.,
\begin{equation} \label{Ramanujan_sum}
c_n(k):= \sum_{\substack{1\le j \le n \\ \gcd(j,n)=1}} \exp(2\pi i
jk/n).
\end{equation}

The function $E$ given by \eqref{def_func_E} has been introduced by
Mednykh and Nedela \cite{MedNed2006} in order to handle certain
problems of enumerative combinatorics. The function
\eqref{def_func_E} has also arithmetical and topological
applications and it is called in \cite{Lis2010} as the
``orbicyclic'' arithmetic function.

For example, $E(m_1,\ldots,m_r)$ is the number of solutions
$(x_1,\ldots,x_r)\in \Z_M^r$ of the congruence
\begin{equation*} \label{lin_eq}
x_1+\ldots +x_r \equiv 0 \text{ (mod $M$)}
\end{equation*}
satisfying $\gcd(x_1,M)=M/m_1,\ldots,\gcd(x_r,M)=M/m_r$, see
\cite[Lemma 4.1]{MedNed2006}. It follows from this interpretation
that all the values $E(m_1,\ldots,m_r)$ are nonnegative integers.

Note that in case of one, respectively two variables,
\begin{equation} \label{one_variable_E}
E(m_1):= \frac1{M}\sum_{k=1}^M c_{m_1}(k)=
\begin{cases} 1, \  & m_1=1, \\  0, & \text{otherwise}, \end{cases}
\end{equation}
\begin{equation} \label{two_variable_E}
E(m_1,m_2):= \frac1{M}\sum_{k=1}^M c_{m_1}(k)c_{m_2}(k)=
\begin{cases} \phi(m), \  & m_1=m_2=m, \\  0, & \text{otherwise}, \end{cases}
\end{equation}
where $\phi$ is Euler's function. Formulae \eqref{one_variable_E}
and \eqref{two_variable_E} are well known properties of the
Ramanujan sums, \eqref{two_variable_E} being their orthogonality
property leading to the Ramanujan expansions of arithmetical
functions, see for example \cite{SchSpi1994}.

Another function, similar to $E$, is
\begin{equation} \label{A_multi}
A(m_1,\ldots,m_r):= \frac1{m} \sum_{k=1}^m \gcd(k,m_1)\cdots
\gcd(k,m_r),
\end{equation}
where $m_1,\ldots, m_r\in \N$ ($r\in \N$) and
$m:=\lcm[m_1,\ldots,m_r]$, as above.

The function \eqref{A_multi} was mentioned by Liskovets
\cite[section 4]{Lis2010} and it was considered by Deitmar, Koyama
and Kurokawa \cite{DeiKoyKur2008} in case $m_j\mid m_{j+1}$ ($1\le
j\le r-1$) by studying analytic properties of some zeta functions of
Igusa type. The explicit formula for the values $A(m_1,\ldots,m_r)$
derived in \cite[Section 3]{DeiKoyKur2008} was reproved by Minami
\cite{Min2009} for the general case $m_1,\ldots, m_r\in \N$, using
arguments of elementary probability theory. We remark that the
corresponding formulae of both papers \cite{DeiKoyKur2008,Min2009}
contain misprints.

For $r=1$ \eqref{A_multi} reduces to the function
\begin{equation} \label{A_arith_mean}
A(m):= \frac1{m} \sum_{k=1}^m \gcd(k,m)=\sum_{d\mid m}
\frac{\phi(d)}{d}
\end{equation}
giving the arithmetic mean of $\gcd(1,m), \ldots, \gcd(m,m)$. For
arithmetic and analytic properties of \eqref{A_arith_mean} and for a
survey of other gcd-sum-type functions of one variable see T\'oth
\cite{Tot2010}.

In the present paper we deduce new properties of the functions $E$
and $A$ and use them to give simple number theoretic proofs for some
of their known properties. We derive convolution-type identities for
$E$ and $A$ (Propositions \ref{prop_convo_repr_E} and
\ref{prop_convo_repr_A}) to show that they are multiplicative as
functions of several variables. We give other identities for $E$ and
$A$ (Propositions \ref{prop_sum_d_E} and \ref{prop_sum_d_A}) to
obtain explicit formulae for their values. We consider common
generalizations of these functions and point out that the function
$E$ and its connection to the number of solutions of certain linear
congruences occur in the literature in a slightly different form.

As an application of the identity of Proposition
\ref{prop_convo_repr_E} we give a simple direct proof of the
orthogonality property \eqref{two_variable_E} of the Ramanujan sums
(Application \ref{appl_ortho_Ramanujan}).

\section{Preliminaries} \label{section_prelim}

We present in this section some basic notions and properties needed
throughout the paper. For the prime power factorization of an
integer $n\in \N$ we will use the notation $n=\prod_p p^{\nu_p(n)}$,
where the product is over the primes $p$ and all but a finite number
of the exponents $\nu_p(n)$ are zero.

We recall that an arithmetic function of $r$ variables is a function
$f:\N^r \to \C$, notation $f\in {\cal F}_r$.  If $f,g\in {\cal
F}_r$, then their convolution is defined as
\begin{equation} \label{convo_functions}
(f*g)(m_1,\ldots,m_r)= \sum_{d_1\mid m_1, \ldots, d_r\mid m_r}
f(d_1,\ldots,d_r) g(m_1/d_1, \ldots, m_r/d_r).
\end{equation}

The set ${\cal F}_r$ forms a unital ring with ordinary addition and
convolution \eqref{convo_functions}, the unity being the function
$\varepsilon^{(r)}$ given by
\begin{equation} \label{function_unity}
\varepsilon^{(r)}(m_1,\ldots,m_r)= \begin{cases} 1, \  & m_1=\ldots
=m_r=1, \\  0, & \text{otherwise}. \end{cases}
\end{equation}

A function $f\in {\cal F}_r$ is invertible iff $f(1,\ldots,1)\ne 0$.
The inverse of the constant $1$ function is given by
$\mu^{(r)}(m_1,\ldots,m_r) = \mu(m_1) \ldots \mu(m_r)$, where $\mu$
is the M\"obius function.

A function $f\in {\cal F}_r$ is said to be multiplicative if it is
nonzero and
\begin{equation*} \label{def_mult}
f(m_1n_1,\ldots,m_rn_r)= f(m_1,\ldots,m_r) f(n_1,\ldots,n_r)
\end{equation*}
holds for any $m_1,\ldots,m_r,n_1,\ldots,n_r\in \N$ such that
$\gcd(m_1\cdots m_r,n_1\cdots n_r)=1$.

If $f$ is multiplicative, then it is determined by the values
$f(p^{a_1},\ldots,p^{a_r})$, where $p$ is a prime and
$a_1,\ldots,a_r\in \N_0:=\{0,1,2,\ldots\}$. More exactly,
$f(1,\ldots,1)=1$ and for any $m_1,\ldots,m_r\in \N$,
\begin{equation*}
f(m_1,\ldots,m_r)= \prod_p f(p^{\nu_p(m_1)}, \ldots,p^{\nu_p(m_r)}).
\end{equation*}

For example, the functions $(m_1,\ldots,m_r) \mapsto
\gcd(m_1,\ldots,m_r)$ and $(m_1,\ldots,m_r) \mapsto
\lcm[m_1,\ldots,m_r]$ are multiplicative. The function $\mu^{(r)}$
is also multiplicative.

The convolution \eqref{convo_functions} preserves the
multiplicativity of functions. Moreover, the multiplicative
functions form a subgroup of the group of invertible functions with
respect to convolution \eqref{convo_functions}.

These properties, which are well known in the one variable case,
follow easily from the definitions. For further properties of
arithmetic functions of several variables and of their ring we refer
to \cite{AlkZahZak2005}, \cite[Ch.\ VII]{Siv1989}.

In the one variable case $\1$, $\id_k$ ($k\in \N$), $\varepsilon$
and $\tau$ will denote the functions given by $\1(n)=1$,
$\id_k(n)=n^k$ ($n\in \N$), $\varepsilon(1)=1$, $\varepsilon(n)=0$
for $n>1$ and $\tau=\1*\1$ (divisor function), respectively.

The Ramanujan sums $c_n(k)$ can be represented as
\begin{equation} \label{Ramanujan_repr}
c_n(k)=\sum_{d \mid \gcd(k,n)} d\mu(n/d) \quad (k,n\in \N).
\end{equation}

Note that the function $n\mapsto c_n(k)$ is multiplicative for any
fixed $k\in \N$, and for any prime power $p^a$ ($a\in \N$),
\begin{equation} \label{c_n(k)}
c_{p^a}(k)=\begin{cases} p^a-p^{a-1}, \ \text{ if } \ p^a\mid k, \\
-p^{a-1}, \ \text{ if } \ p^{a-1}\mid k, p^a\nmid k, \\ 0, \ \text{
if } \ p^{a-1}\nmid k.
\end{cases}
\end{equation}

Also, $c_n(k)$ is multiplicative as a function of two variables,
i.e., considered as the function $c:\N^2\to \Z$, $c(k,n)=c_n(k)$.
The inequality $|c_n(k)|\le \gcd(k,n)$ holds for any $k,n\in \N$.

These and other general accounts of Ramanujan sums can be found in
the books by Apostol \cite{Apo1976}, McCarthy \cite{McC1986},
Schwarz and Spilker \cite{SchSpi1994}, Sivaramakrishnan
\cite{Siv1989}.

\section{The function $E$}

The following results were proved by Liskovets \cite{Lis2010}.

\begin{prop} {\rm (\cite[Lemmas 2, 5, Prop.\ 4]{Lis2010})} \label{prop_Liskovets}

(i) The function $E$ is multiplicative (as a function of several
variables).

(ii) Let $p^{a_1},\ldots, p^{a_r}$ be any powers of a prime $p$
($a_1,\ldots,a_r\in \N$). Assume that $a:=a_1=a_2=\ldots
=a_s>a_{s+1}\ge a_{s+2}\ge \ldots \ge a_r\ge 1$ ($r\ge s\ge 1$).
Then
\begin{equation}
E(p^{a_1},\ldots,p^{a_r})= p^{v} (p-1)^{r-s+1} h_{s}(p),
\end{equation}
where the integer $v$ is defined by $v=\sum_{j=1}^r a_j-r-a+1$ and
\begin{equation}
h_s(x)= \frac{(x-1)^{s-1}+(-1)^s}{x}
\end{equation}
is a polynomial of degree $s-2$ (for $s>1$).
\end{prop}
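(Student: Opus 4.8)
The plan is to reduce the definition to the case $M = m$ and then exploit the explicit prime-power values \eqref{c_n(k)} of the Ramanujan sums. The preliminary observation is that each $c_{m_j}(k)$ is periodic in $k$ with period $m_j \mid m$, so the product $k \mapsto \prod_{j=1}^r c_{m_j}(k)$ has period $m$; writing $M = qm$ then gives $\frac1M\sum_{k=1}^M = \frac1m\sum_{k=1}^m$. Hence one may take $M = m$ throughout, so that $E(m_1,\ldots,m_r) = \frac1m\sum_{k=1}^m \prod_{j=1}^r c_{m_j}(k)$.

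For (i), assume $\gcd(m_1\cdots m_r, n_1\cdots n_r) = 1$ and put $m = \lcm[m_1,\ldots,m_r]$, $n = \lcm[n_1,\ldots,n_r]$, so $\gcd(m,n) = 1$ and $\lcm[m_1n_1,\ldots,m_rn_r] = mn$. The Chinese Remainder Theorem provides a bijection $k \leftrightarrow (k_1,k_2)$ between $\{1,\ldots,mn\}$ and $\{1,\ldots,m\} \times \{1,\ldots,n\}$ with $k \equiv k_1 \pmod m$ and $k \equiv k_2 \pmod n$. Since $n \mapsto c_n(k)$ is multiplicative for fixed $k$, one has $c_{m_jn_j}(k) = c_{m_j}(k)\,c_{n_j}(k)$ (as $\gcd(m_j,n_j) = 1$), and by periodicity $c_{m_j}(k) = c_{m_j}(k_1)$, $c_{n_j}(k) = c_{n_j}(k_2)$. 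Substituting these, the sum over $k$ factors into the product of a sum over $k_1$ and a sum over $k_2$, giving $E(m_1n_1,\ldots,m_rn_r) = E(m_1,\ldots,m_r)\,E(n_1,\ldots,n_r)$.

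For (ii) we have $m = p^a$, and I would compute $\sum_{k=1}^{p^a}\prod_{j=1}^r c_{p^{a_j}}(k)$ by grouping the $k$ according to $t := \nu_p(k)$. By \eqref{c_n(k)}, $c_{p^{a_j}}(k) = 0$ as soon as $t \le a_j - 2$; since $\max_j a_j = a$, the product vanishes unless $t \ge a-1$, so only two classes survive. For the single $k = p^a$ (with $t = a$), every $a_j \le a$ yields $c_{p^{a_j}}(p^a) = p^{a_j-1}(p-1)$, contributing $(p-1)^r p^{\sum_j a_j - r}$. For the $p-1$ values with $t = a-1$, the $s$ indices having $a_j = a$ give $c_{p^a}(k) = -p^{a-1}$ and the remaining $r-s$ indices give $p^{a_j-1}(p-1)$; the exponent of $p$ again simplifies to $\sum_j a_j - r$, so each such $k$ contributes $(-1)^s (p-1)^{r-s} p^{\sum_j a_j - r}$. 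Adding the two classes and factoring out $(p-1)^{r-s+1} p^{\sum_j a_j - r}$ leaves the bracket $(p-1)^{s-1} + (-1)^s = p\,h_s(p)$; dividing by $p^a$ gives $p^v (p-1)^{r-s+1} h_s(p)$ with $v = \sum_j a_j - r - a + 1$. Finally $h_s$ is a polynomial of degree $s-2$ because the constant term $(-1)^{s-1}$ of $(x-1)^{s-1}$ is cancelled by $(-1)^s$, so $x$ divides $(x-1)^{s-1} + (-1)^s$.

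The main obstacle is the bookkeeping in (ii): correctly identifying which residues $k$ survive, counting exactly $p-1$ of them in the class $t = a-1$, and verifying that both surviving classes share the common factor $p^{\sum_j a_j - r}$, which is precisely what allows the clean factorization producing $h_s(p)$.
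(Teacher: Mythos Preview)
Your argument is correct in both parts.

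For (i) your route differs from the paper's. You argue directly via the Chinese Remainder Theorem and the periodicity/multiplicativity of Ramanujan sums, which is essentially the style of argument Liskovets used. The paper instead first establishes the convolution identity
\[
E(m_1,\ldots,m_r)=\sum_{d_1\mid m_1,\ldots,d_r\mid m_r}\frac{d_1\cdots d_r}{\lcm[d_1,\ldots,d_r]}\,\mu(m_1/d_1)\cdots\mu(m_r/d_r)
\]
(Proposition~\ref{prop_convo_repr_E}) and then reads off multiplicativity as $E=F*\mu^{(r)}$ with $F(m_1,\ldots,m_r)=m_1\cdots m_r/\lcm[m_1,\ldots,m_r]$ multiplicative. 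Your argument is shorter and self-contained; the paper's buys the convolution representation itself, which it then reuses for several further applications (integrality, independence of $M$, the orthogonality relation~\eqref{two_variable_E}).

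For (ii) your computation is essentially the same as the paper's (Application~6 via Proposition~\ref{prop_sum_d_E}): grouping $k\in\{1,\ldots,p^a\}$ by $t=\nu_p(k)$ is exactly grouping by $\gcd(k,p^a)=p^t$, and the multiplicity of each class is $\phi(p^{a-t})$. The paper packages this as the general identity $E(m_1,\ldots,m_r)=\frac1m\sum_{d\mid m}c_{m_1}(d)\cdots c_{m_r}(d)\,\phi(m/d)$ before specializing to prime powers, whereas you do the grouping ad hoc for $m=p^a$; the two surviving terms $d=p^a,\,p^{a-1}$ and the subsequent algebra are identical.
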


Note that Liskovets used the term semi-multiplicative function, but
this is reserved for another concept, see for example \cite[Ch.\ XI]{Siv1989}.

\begin{cor} {\rm (\cite[Th.\ 8, Cor.\ 11]{Lis2010})} \label{cor_Liskovets}

(i) For any integers $m_1,\ldots,m_r\in \N$,
\begin{equation}
E(m_1,\ldots,m_r)=\prod_{p\mid m} p^{v(p)} (p-1)^{r(p)-s(p)+1}
h_{s(p)}(p),
\end{equation}
where $v(p)$ and $s(p)$, depending now on $p$ are the integers $v$
and $s$, respectively, defined in Proposition \ref{prop_Liskovets}.

(ii) For $m_1=\ldots =m_r=m$,
\begin{equation} \label{formula_E_equal_variables}
f_r(m):= E(m,\ldots,m)= m^{r-1} \prod_{p\mid m} \frac{(p-1)
h_{r}(p)}{p^{r-1}}.
\end{equation}
\end{cor}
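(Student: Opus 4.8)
The plan is to derive both parts of Corollary~\ref{cor_Liskovets} directly from Proposition~\ref{prop_Liskovets}, using the multiplicativity of $E$. For part~(i), since $E$ is multiplicative by Proposition~\ref{prop_Liskovets}(i), the product formula from the Preliminaries gives
\begin{equation*}
E(m_1,\ldots,m_r)= \prod_p E(p^{\nu_p(m_1)},\ldots,p^{\nu_p(m_r)}).
\end{equation*}
For each prime $p\nmid m=\lcm[m_1,\ldots,m_r]$, all exponents $\nu_p(m_j)$ vanish, so the factor is $E(1,\ldots,1)=1$ and contributes nothing; hence the product may be restricted to $p\mid m$. For each prime $p\mid m$, at least one exponent is positive, and after relabeling I may assume the exponents are arranged in the decreasing pattern required by Proposition~\ref{prop_Liskovets}(ii); applying that formula with $r$, $s$, $v$ replaced by their $p$-dependent values $r(p)$, $s(p)$, $v(p)$ gives the stated local factor $p^{v(p)}(p-1)^{r(p)-s(p)+1}h_{s(p)}(p)$. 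The only subtlety is that Proposition~\ref{prop_Liskovets}(ii) is stated for $r\ge s\ge 1$ with all $a_j\ge 1$, whereas here some exponents $\nu_p(m_j)$ may be zero; I would note that $E$ is symmetric in its variables (immediate from the defining symmetric sum \eqref{def_func_E}) and that zero exponents correspond to variables equal to $1$, so one reduces to the positive-exponent case by discarding the trivial entries, with $r(p)$ counting the variables actually divisible by $p$.

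For part~(ii), I specialize $m_1=\ldots=m_r=m$. Then for every prime $p\mid m$ all $r$ exponents equal $a:=\nu_p(m)\ge 1$, so in the notation of Proposition~\ref{prop_Liskovets}(ii) we have $s(p)=r$ and $r(p)=r$, whence $r(p)-s(p)+1=1$ and $h_{s(p)}=h_r$. The exponent $v(p)$ becomes $v(p)=\sum_{j=1}^r a - r - a + 1 = ra - r - a + 1 = (r-1)(a-1)$, so the local factor is $p^{(r-1)(a-1)}(p-1)h_r(p)$. Collecting over $p\mid m$ yields
\begin{equation*}
f_r(m)=\prod_{p\mid m} p^{(r-1)(a-1)}(p-1)h_r(p)
=\prod_{p\mid m} p^{(r-1)\nu_p(m)} \cdot \frac{(p-1)h_r(p)}{p^{r-1}}.
\end{equation*}
Since $\prod_{p\mid m} p^{(r-1)\nu_p(m)} = \left(\prod_{p\mid m} p^{\nu_p(m)}\right)^{r-1} = m^{r-1}$, factoring out $m^{r-1}$ produces exactly the claimed formula \eqref{formula_E_equal_variables}.

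The routine algebra is the rewriting $p^{(r-1)(a-1)} = p^{(r-1)a}/p^{r-1}$ and the recognition of $m^{r-1}$ as the product of the prime-power pieces; neither presents any real difficulty. The main point to handle carefully is the bookkeeping in part~(i): one must justify passing from the general arrangement of exponents $\nu_p(m_j)$ (which need not be sorted and may include zeros) to the normalized form assumed in Proposition~\ref{prop_Liskovets}(ii). This is where I would invoke the symmetry of $E$ in its arguments and the fact that $E(1,\ldots,1,p^{a_{i_1}},\ldots,p^{a_{i_k}})$ equals $E$ of just the nontrivial coordinates, so that $s(p)$ and $r(p)$ are intrinsically defined by the multiset of positive exponents at $p$ rather than by any particular ordering. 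Once this reduction is in place, both parts follow by direct substitution, so the corollary is essentially an unpacking of the multiplicative structure already established.
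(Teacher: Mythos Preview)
Your proposal is correct and matches the paper's treatment: the paper does not supply a separate proof of this corollary but simply cites it from \cite{Lis2010} as the evident consequence of Proposition~\ref{prop_Liskovets}, and your argument (multiplicativity to reduce to prime powers, then substitution of the local data, with the specialization $s(p)=r(p)=r$, $v(p)=(r-1)(\nu_p(m)-1)$ in part~(ii)) is exactly that derivation spelled out. Your remark on handling zero exponents via symmetry and the identity $c_1(k)=1$ is the right justification for why $r(p)$ counts only the coordinates divisible by $p$.
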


We first give the following simple convolution representation for
the function $E$.

\begin{prop} \label{prop_convo_repr_E} For any $m_1,\ldots,m_r\in \N$,
\begin{equation} \label{convo_repr_E}
E(m_1,\ldots,m_r)= \sum_{d_1\mid m_1, \ldots, d_r\mid m_r}
\frac{d_1\cdots d_r}{\lcm[d_1,\ldots, d_r]} \mu(m_1/d_1)\cdots
\mu(m_r/d_r).
\end{equation}
\end{prop}

\begin{proof} Using formula \eqref{Ramanujan_repr},
\begin{equation*}
E(m_1,\ldots,m_r)= \frac1{M} \sum_{k=1}^M \sum_{d_1 \mid
\gcd(k,m_1)} d_1\mu(m_1/d_1) \cdots \sum_{d_r \mid \gcd(k,m_r)}
d_r\mu(m_r/d_r)
\end{equation*}
\begin{equation*}
= \frac1{M} \sum_{d_1\mid m_1, \ldots, d_r\mid m_r} d_1\mu(m_1/d_1)
\cdots d_r\mu(m_r/d_r) \sum_{1\le k\le M, d_1\mid k,\ldots d_r\mid
k} 1,
\end{equation*}
where the inner sum is $\ds \sum_{1\le k\le M,
\lcm[d_1,\ldots,d_r]\mid k} 1= M/\lcm[d_1,\ldots, d_r]$, ending the
proof.
\end{proof}

By M\"obius inversion we obtain from \eqref{convo_repr_E},

\begin{cor} For any $m_1,\ldots,m_r\in \N$,
\begin{equation} \label{sum_E}
\sum_{d_1\mid m_1, \ldots, d_r\mid m_r} E(d_1,\ldots,d_r)=
\frac{m_1\cdots m_r}{\lcm[m_1,\ldots, m_r]}.
\end{equation}
\end{cor}

Formula \eqref{convo_repr_E} has also a number of other
applications:

\begin{appl} {\rm Formula \eqref{convo_repr_E} shows that $E$ is integral valued and that it
does not depend on $M$, so in \eqref{def_func_E} one can take $M=m$,
the lcm of $m_1,\ldots,m_r$, remarked also by Liskovets
\cite[Section 1]{Lis2010}.}
\end{appl}

\begin{appl} {\rm If $m_1,\ldots,m_r$ are pairwise relatively prime, then
$E(m_1,\ldots,m_r)= \varepsilon^{(r)}(m_1,\ldots,m_r)$, defined by
\eqref{function_unity}. Indeed, in this case
$\lcm[d_1,\ldots,d_r]=d_1\cdots d_r$ for any $d_i\mid m_i$ ($1\le
i\le r$) and the claim follows from \eqref{convo_repr_E} using that
$\sum_{d\mid n} \mu(d) =\varepsilon(n)$.}
\end{appl}

\begin{appl} {\rm Also, \eqref{convo_repr_E} furnishes a simple direct proof of the
multiplicativity of $E$. Note that Liskovets \cite{Lis2010} used
other arguments to show the multiplicativity. Observe that,
according to \eqref{convo_repr_E}, $E$ is the convolution of the
functions $F$ and $\mu^{(r)}$, where $F$ is given by
\begin{equation*} \label{F}
F(m_1,\ldots,m_r) = \frac{m_1\cdots m_r}{\lcm[m_1,\ldots,m_r]}.
\end{equation*}

Since $F$ and $\mu^{(r)}$ are multiplicative, $E$ is multiplicative
too.}
\end{appl}

\begin{appl} \label{appl_ortho_Ramanujan} {\rm Formula \eqref{convo_repr_E}
leads to a simple direct proof of the
orthogonality property \eqref{two_variable_E}. Using the Gauss
formula $\sum_{d\mid n} \phi(d)=n$,
\begin{equation*}
E(m_1,m_2):= \frac1{M}\sum_{k=1}^M c_{m_1}(k)c_{m_2}(k)=
\sum_{d_1\mid m_1, d_2\mid m_2} \frac{d_1d_2 \mu(m_1/d_1)
\mu(m_2/d_2)}{\lcm[d_1,d_2]}
\end{equation*}
\begin{equation*}
= \sum_{d_1\mid m_1, d_2\mid m_2} \mu(m_1/d_1) \mu(m_2/d_2)
\gcd(d_1,d_2)= \sum_{d_1\mid m_1, d_2\mid m_2} \mu(m_1/d_1)
\mu(m_2/d_2) \sum_{\delta \mid \gcd(d_1,d_2)} \phi(\delta)
\end{equation*}
\begin{equation*}
= \sum_{\delta ak=m_1, \delta b\ell= m_2} \mu(k) \mu(\ell)
\phi(\delta) = \sum_{\delta u=m_1, \delta v=m_2} \phi(\delta)
\sum_{ak=u} \mu(k) \sum_{b\ell=v} \mu(\ell),
\end{equation*}
where one of the inner sums are zero, unless $u=v=1$ and obtain that
$E(m_1,m_2)= \phi(m)$ for $m_1=m_2=m$ and $E(m_1,m_2)=0$ otherwise.}
\end{appl}

Now we derive another identity for $E$ which furnishes an
alternative proof of formula (ii) in Proposition
\ref{prop_Liskovets}.

\begin{prop} \label{prop_sum_d_E} For any $m_1,\ldots,m_r\in \N$,
\begin{equation} \label{another_repr_E}
E(m_1,\ldots,m_r)= \frac1{m}\sum_{d\mid m} c_{m_1}(d)\cdots
c_{m_r}(d)\phi(m/d).
\end{equation}
\end{prop}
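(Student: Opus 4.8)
The plan is to start from the definition \eqref{def_func_E}, where, as noted in the first Application, one may take $M=m$, so that
\[
E(m_1,\ldots,m_r)=\frac1m\sum_{k=1}^m c_{m_1}(k)\cdots c_{m_r}(k).
\]
The guiding observation is that the summand depends on $k$ only through $\gcd(k,m)$; hence I would partition the range $1\le k\le m$ according to the value $d=\gcd(k,m)$ and collect equal terms. This converts the full sum over $k$ into a sum over divisors $d\mid m$, each weighted by the number of $k$ giving that gcd, which will turn out to be $\phi(m/d)$.

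First I would record the elementary fact that the Ramanujan sum $c_n(k)$ depends on $k$ only through $\gcd(k,n)$; this is immediate from the representation \eqref{Ramanujan_repr}, since the inner summation runs precisely over the divisors of $\gcd(k,n)$. Thus $c_n(k)=c_n(k')$ whenever $\gcd(k,n)=\gcd(k',n)$.

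Next, since each $m_i\mid m$, I would invoke the gcd identity $\gcd(k,m_i)=\gcd(\gcd(k,m),m_i)$, valid whenever $m_i\mid m$ (one checks it prime by prime, using $\min(\nu_p(k),\nu_p(m_i))=\min(\nu_p(k),\nu_p(m),\nu_p(m_i))$, which holds because $\nu_p(m_i)\le\nu_p(m)$). Consequently, if $d=\gcd(k,m)$ then $\gcd(k,m_i)=\gcd(d,m_i)$ for every $i$, and by the previous step $c_{m_i}(k)=c_{m_i}(d)$. Therefore the whole product equals $c_{m_1}(d)\cdots c_{m_r}(d)$, a quantity depending only on $d$.

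Finally, I would group the terms of the sum over $k$ by the value $d=\gcd(k,m)$, which ranges over the divisors of $m$. For a fixed such $d$, writing $k=dj$ with $\gcd(j,m/d)=1$ and $1\le j\le m/d$ shows that the number of $k$ with $1\le k\le m$ and $\gcd(k,m)=d$ equals $\phi(m/d)$. This yields
\[
\sum_{k=1}^m c_{m_1}(k)\cdots c_{m_r}(k)=\sum_{d\mid m}\phi(m/d)\,c_{m_1}(d)\cdots c_{m_r}(d),
\]
and dividing by $m$ gives \eqref{another_repr_E}. I expect no serious obstacle here: the only step requiring care is the replacement of $c_{m_i}(k)$ by $c_{m_i}(d)$, which rests entirely on the gcd identity made possible by the divisibility $m_i\mid m$.
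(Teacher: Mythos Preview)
Your proposal is correct and follows essentially the same route as the paper's own proof: reduce to $M=m$, use \eqref{Ramanujan_repr} to see that $c_{m_i}(k)$ depends only on $\gcd(k,m_i)$, invoke the gcd identity $\gcd(k,m_i)=\gcd(\gcd(k,m),m_i)$ (which the paper phrases via associativity of $\gcd$ rather than prime valuations), and then group the sum over $k$ by the value $d=\gcd(k,m)$ to pick up the factor $\phi(m/d)$.
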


\begin{proof} It follows from \eqref{Ramanujan_repr} that $c_n(k)=c_n(\gcd(k,n))$
($k,n\in \N$). Observe that for any $i\in \{1,\ldots,r\}$, $
\gcd(\gcd(k,m),m_i)=\gcd(k,\gcd(m,m_i)) =\gcd(k,m_i)$, since
$m_i\mid m$, hence $c_{m_i}(k)=c_{m_i}(\gcd(k,m_i))=
c_{m_i}(\gcd(\gcd(k,m),m_i))=c_{m_i}(\gcd(k,m))$. We obtain
\begin{equation*} \label{gcd_repr_func_E}
E(m_1,\ldots,m_r)= \frac1{m}\sum_{k=1}^m c_{m_1}(\gcd(k,m))\cdots
c_{m_r}(\gcd(k,m)),
\end{equation*}
and by grouping the terms according to the values $\gcd(k,m)=d$,
where $d\mid m$, $k=dj$, $1\le j\le m/d$, $\gcd(j,m/d)=1$, we obtain
\eqref{another_repr_E}.
\end{proof}

\begin{appl} {\rm By \eqref{another_repr_E}, with the notation of Proposition
\ref{prop_Liskovets},
\begin{equation} \label{c_sum}
E(p^{a_1},\ldots,p^{a_r})= \frac1{p^a}\sum_{d\mid p^a}
c_{p^{a_1}}(d)\cdots c_{p^{a_r}}(d) \phi(p^a/d).
\end{equation}

Using \eqref{c_n(k)} we see that only two terms of \eqref{c_sum} are
nonzero, namely those for $d=p^a$ and $d=p^{a-1}$. Hence,
\begin{equation*}
E(p^{a_1},\ldots,p^{a_r})= \frac1{p^a} \left(c_{p^{a_1}}(p^a)\cdots
c_{p^{a_r}}(p^a) \phi(1)+ c_{p^{a_1}}(p^{a-1})\cdots
c_{p^{a_r}}(p^{a-1}) \phi(p)\right)
\end{equation*}
\begin{equation*}
= \frac1{p^a} \left( (p-1)p^{a_1-1} \cdots (p-1)p^{a_r-1} +
(-p^{a-1})^s (p-1)p^{a_{s+1}-1} \cdots (p-1)p^{a_r-1} (p-1)\right),
\end{equation*}
and a short computation gives formula (ii) in Proposition
\ref{prop_Liskovets}.}
\end{appl}

Consider the function $f_r(m)$ defined in Corollary
\ref{cor_Liskovets} (case $m_1=\ldots =m_r=m$). Here
$f_1=\varepsilon$, $f_2=\phi$.

\begin{prop} \label{average_order}
Let $r\ge 3$. The average order of the function $f_r(m)$ is
$C_rm^{r-1}$, where
\begin{equation}
C_r:=  \prod_p \left(1+ \frac{(p-1) h_{r}(p)-p^{r-1}}{p^r}\right).
\end{equation}

More exactly, for any $0< \varepsilon < 1$,
\begin{equation} \label{asymp_E}
\sum_{m\le x} f_r(m)= \frac{C_r}{r}x^{r} +{\cal
O}(x^{r-1+\varepsilon}).
\end{equation}
\end{prop}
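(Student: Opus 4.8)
The plan is to realize $f_r$ as a Dirichlet convolution with the completely multiplicative function $\id_{r-1}$, whose partial sums are elementary, and to show that the complementary factor is small. First I would write $f_r = \id_{r-1} * g$, where $g := f_r * (\id_{r-1})^{-1}$. Since $\id_{r-1}$ is completely multiplicative, its convolution inverse is $n \mapsto \mu(n)n^{r-1}$, so $g$ is multiplicative, and unwinding the convolution gives $g(p^a) = f_r(p^a) - p^{r-1}f_r(p^{a-1})$ at every prime power $p^a$ ($a \ge 1$).

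Next I would compute $g$ on prime powers from the explicit value $f_r(p^a) = p^{(a-1)(r-1)}(p-1)h_r(p)$ read off from \eqref{formula_E_equal_variables}. A direct substitution gives $g(p) = (p-1)h_r(p) - p^{r-1}$ and, crucially, $g(p^a) = 0$ for all $a \ge 2$, the two contributions cancelling exactly. Hence $g$ is supported on squarefree integers with $g(n) = \prod_{p\mid n} g(p)$, and its multiplicativity yields
\[
\sum_{n=1}^{\infty} \frac{g(n)}{n^r} = \prod_p \left(1 + \frac{g(p)}{p^r}\right) = \prod_p \left(1 + \frac{(p-1)h_r(p) - p^{r-1}}{p^r}\right) = C_r,
\]
which identifies the constant appearing in the statement.

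To control sizes I would expand $(p-1)h_r(p) = \bigl((p-1)^r + (-1)^r(p-1)\bigr)/p$ and subtract $p^{r-1}$, obtaining $g(p) = \bigl((p-1)^r - p^r + (-1)^r(p-1)\bigr)/p$. The decisive observation is that the $p^r$ terms cancel in $(p-1)^r - p^r$, leaving $g(p) = -rp^{r-2} + {\cal O}(p^{r-3})$; in particular $g(p) \ll p^{r-2}$ uniformly in $p$. Since $g$ is supported on squarefrees and $\prod_{p\mid n}p = n$ there, this gives $|g(n)| \ll_\varepsilon n^{r-2+\varepsilon}$, which simultaneously guarantees the absolute convergence of the product defining $C_r$ and furnishes the tail bounds $\sum_{d > x} |g(d)|/d^r \ll_\varepsilon x^{-1+\varepsilon}$ and $\sum_{d \le x} |g(d)|/d^{r-1} \ll_\varepsilon x^{\varepsilon}$.

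Finally I would assemble the estimate. Writing $\sum_{m \le x} f_r(m) = \sum_{d \le x} g(d) \sum_{e \le x/d} e^{r-1}$ and inserting $\sum_{e \le y} e^{r-1} = y^r/r + {\cal O}(y^{r-1})$ gives
\[
\sum_{m \le x} f_r(m) = \frac{x^r}{r} \sum_{d \le x} \frac{g(d)}{d^r} + {\cal O}\!\left(x^{r-1} \sum_{d \le x} \frac{|g(d)|}{d^{r-1}}\right).
\]
Replacing the truncated sum $\sum_{d \le x} g(d)/d^r$ by $C_r$ at the cost of its tail, and invoking the two bounds above, collapses everything to $\frac{C_r}{r} x^r + {\cal O}(x^{r-1+\varepsilon})$, which is \eqref{asymp_E}. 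I expect the main obstacle to be the cancellation step: verifying $g(p^a)=0$ for $a\ge 2$ (which makes $g$ squarefree-supported and the product for $C_r$ clean) together with the leading-term cancellation in $(p-1)^r - p^r$ that forces $g(p)\ll p^{r-2}$ rather than the a priori size $p^{r-1}$. It is exactly this extra saving of one power of $p$ that renders $C_r$ absolutely convergent and the error term admissible; the hypothesis $r\ge 3$ enters only in ensuring that the term $(-1)^r(p-1)$ is of strictly lower order than $rp^{r-1}$, so that $g(p)\sim -rp^{r-2}$ genuinely.
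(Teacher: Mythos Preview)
Your proposal is correct and follows essentially the same approach as the paper: decompose $f_r=\id_{r-1}*g$, observe that $g$ is multiplicative with $g(p)=(p-1)h_r(p)-p^{r-1}$ and $g(p^a)=0$ for $a\ge 2$, and then carry out the standard convolution summation. The paper obtains this decomposition via the Dirichlet-series factorization $\sum f_r(m)m^{-s}=\zeta(s-r+1)\prod_p(1+g(p)p^{-s})$ and leaves the final estimates implicit (``by usual estimates''), whereas you compute $g$ directly from the convolution inverse and spell out the bounds $g(p)\ll p^{r-2}$, the tail estimate, and the assembly---but the substance is identical.
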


\begin{proof} The function $f_r$ is multiplicative and by
\eqref{formula_E_equal_variables},
\begin{equation*}
\sum_{m=1}^{\infty} \frac{f_r(m)}{m^s} = \zeta(s-r+1) \prod_p \left(
1+\frac{(p-1)h_r(p)-p^{r-1}}{p^s}\right)
\end{equation*}
for $s\in \C$, $\RE s>r$, where the infinite product is absolutely
convergent for $\RE s>r-1$. Hence $f_r=g_r* \id_{r-1}$ in terms of
the Dirichlet convolution, where $g_r$ is multiplicative and for any
prime power $p^a$ ($a\in \N$),
\begin{equation*}
g_r(p^a)= \begin{cases} (p-1)h_r(p)-p^{r-1}, & a=1, \\
0, & a\ge 2.
\end{cases}
\end{equation*}

We obtain
\begin{equation*}
\sum_{m\le x} f_r(m) = \sum_{d\le x} g_r(d) \sum_{e\le x/d} e^{r-1}
= \frac{x^r}{r} \sum_{d\le x} \frac{g_r(d)}{d^r} +{\cal O}\left(
x^{r-1} \sum_{d\le x} \frac{|g_r(d)|}{d^{r-1}} \right)
\end{equation*}
and \eqref{asymp_E} follows by usual estimates.
\end{proof}

\section{The function $A$}

Consider now the function $A$ given by \eqref{A_multi}.

The next formulae are similar to \eqref{convo_repr_E} and
\eqref{another_repr_E}. The following one was already given in
\cite[Section 3]{Tot2010}.

\begin{prop} \label{prop_convo_repr_A} For any $m_1,\ldots,m_r\in \N$,
\begin{equation} \label{convo_repr_A}
A(m_1,\ldots,m_r)= \sum_{d_1\mid m_1, \ldots, d_r\mid m_r}
\frac{\phi(d_1)\cdots \phi(d_r)}{\lcm[d_1,\ldots, d_r]},
\end{equation}
\end{prop}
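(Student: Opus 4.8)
The plan is to mimic the proof of Proposition \ref{prop_convo_repr_E} almost verbatim, replacing the Ramanujan-sum representation \eqref{Ramanujan_repr} by the Gauss identity
\[
\gcd(k,n) = \sum_{d \mid \gcd(k,n)} \phi(d),
\]
which is just the formula $\sum_{d \mid N} \phi(d) = N$ applied with $N = \gcd(k,n)$, and which was already invoked in Application \ref{appl_ortho_Ramanujan}.

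First I would substitute this identity into the definition \eqref{A_multi}, writing each factor $\gcd(k, m_i)$ as $\sum_{d_i \mid \gcd(k, m_i)} \phi(d_i)$ and expanding the product, so that
\[
A(m_1, \ldots, m_r) = \frac{1}{m} \sum_{k=1}^m \sum_{d_1 \mid \gcd(k, m_1)} \phi(d_1) \cdots \sum_{d_r \mid \gcd(k, m_r)} \phi(d_r).
\]
Next I would interchange the order of summation. Since the condition $d_i \mid \gcd(k, m_i)$ is equivalent to the two conditions $d_i \mid m_i$ and $d_i \mid k$, this gives
\[
A(m_1, \ldots, m_r) = \frac{1}{m} \sum_{d_1 \mid m_1, \ldots, d_r \mid m_r} \phi(d_1) \cdots \phi(d_r) \sum_{\substack{1 \le k \le m \\ d_1 \mid k, \ldots, d_r \mid k}} 1,
\]
exactly paralleling the two displayed lines in the proof of Proposition \ref{prop_convo_repr_E}.

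Finally I would evaluate the inner sum. The simultaneous divisibility conditions $d_1 \mid k, \ldots, d_r \mid k$ amount to $\lcm[d_1, \ldots, d_r] \mid k$, so the inner sum counts the multiples of $\lcm[d_1, \ldots, d_r]$ in the range $1 \le k \le m$ and equals $m / \lcm[d_1, \ldots, d_r]$. Cancelling the prefactor $1/m$ then yields \eqref{convo_repr_A}. I expect no genuine obstacle here: the one point worth verifying is precisely that $\lcm[d_1, \ldots, d_r]$ divides $m$, which is what guarantees the clean count $m / \lcm[d_1, \ldots, d_r]$ rather than a floor term. This holds automatically, since each $d_i \mid m_i$ and $m_i \mid m = \lcm[m_1,\ldots,m_r]$ force $d_i \mid m$ for every $i$, whence $\lcm[d_1, \ldots, d_r] \mid m$.
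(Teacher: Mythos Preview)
Your proposal is correct and matches the paper's own proof essentially verbatim: the paper simply says to repeat the argument of Proposition~\ref{prop_convo_repr_E} after inserting $\gcd(k,m_i)=\sum_{d_i\mid \gcd(k,m_i)} \phi(d_i)$, which is exactly what you do. Your additional remark that $\lcm[d_1,\ldots,d_r]\mid m$ (so the inner count is exact rather than a floor) is a welcome clarification that the paper leaves implicit.
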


\begin{proof} Similar to the proof of Proposition \ref{prop_convo_repr_E},
this is obtained by inserting $\gcd(k,m_i)=\sum_{d_i\mid
\gcd(k,m_i)} \phi(d_i)$ ($1\le i\le r$).
\end{proof}

\begin{cor} The function $A$ is multiplicative (as a function of
several variables).
\end{cor}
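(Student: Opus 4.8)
The plan is to mirror the argument of Application 3, which deduced the multiplicativity of $E$ directly from its convolution representation. Starting from Proposition \ref{prop_convo_repr_A}, I would interpret the right-hand side of \eqref{convo_repr_A} as a convolution in the ring ${\cal F}_r$. Comparing with the definition \eqref{convo_functions}, formula \eqref{convo_repr_A} says precisely that
\[
A = G * \1^{(r)},
\]
where $\1^{(r)}$ is the constant $1$ function in $r$ variables and
\[
G(d_1,\ldots,d_r) = \frac{\phi(d_1)\cdots \phi(d_r)}{\lcm[d_1,\ldots,d_r]}.
\]
Indeed, since $\1^{(r)}(m_1/d_1,\ldots,m_r/d_r)=1$ for every admissible choice of divisors, the convolution $(G*\1^{(r)})(m_1,\ldots,m_r)$ collapses to $\sum_{d_1\mid m_1,\ldots,d_r\mid m_r} G(d_1,\ldots,d_r)$, which is exactly \eqref{convo_repr_A}. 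Note that the absence of any M\"obius factor here (in contrast to \eqref{convo_repr_E}) is what makes the second convolution factor the constant $1$ function rather than $\mu^{(r)}$.

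It then suffices to check that $G$ is multiplicative, since $\1^{(r)}$ is trivially multiplicative and convolution preserves multiplicativity, as recalled in Section \ref{section_prelim}. I would verify the multiplicativity of $G$ by treating numerator and denominator separately. The map $(d_1,\ldots,d_r)\mapsto \phi(d_1)\cdots\phi(d_r)$ is multiplicative as a function of several variables: if $\gcd(m_1\cdots m_r, n_1\cdots n_r)=1$ then in particular $\gcd(m_i,n_i)=1$ for each $i$, so each factor $\phi(m_in_i)$ splits by the one-variable multiplicativity of $\phi$. The denominator $(d_1,\ldots,d_r)\mapsto \lcm[d_1,\ldots,d_r]$ is multiplicative and nowhere zero, as already observed in Section \ref{section_prelim}. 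Hence the pointwise quotient $G$ is multiplicative.

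The argument involves no genuine obstacle; the only point requiring a moment's care is the routine fact that a pointwise quotient of two several-variable multiplicative functions, with the denominator never vanishing, is again multiplicative, which follows at once by applying the defining identity to numerator and denominator. Combining the two observations, $A = G * \1^{(r)}$ is a convolution of multiplicative functions and is therefore itself multiplicative, which completes the proof.
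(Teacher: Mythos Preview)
Your proof is correct and follows exactly the same route as the paper: express $A=G*\1^{(r)}$ via \eqref{convo_repr_A}, observe that both $G$ and the constant $1$ function are multiplicative, and conclude by the preservation of multiplicativity under convolution. The only difference is that you spell out in more detail why $G$ itself is multiplicative, which the paper leaves implicit.
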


\begin{proof} According to
\eqref{convo_repr_A}, $A$ is the convolution of the functions $G$
and the constant $1$ function, where $G$ is given by
\begin{equation*} \label{G}
G(m_1,\ldots,m_r) = \frac{\phi(m_1)\cdots
\phi(m_r)}{\lcm[m_1,\ldots,m_r]},
\end{equation*}
both being multiplicative. Therefore $A$ is also multiplicative.
\end{proof}

\begin{prop} \label{prop_sum_d_A} For any $m_1,\ldots,m_r\in \N$,
\begin{equation} \label{sum_d_A}
A(m_1,\ldots,m_r)= \frac1{m} \sum_{d\mid m} \gcd(d,m_1) \cdots
\gcd(d,m_r) \phi(m/d).
\end{equation}
\end{prop}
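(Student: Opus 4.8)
The plan is to follow the proof of Proposition \ref{prop_sum_d_E} essentially verbatim, simply replacing each Ramanujan sum $c_{m_i}(\cdot)$ by the gcd weight $\gcd(\cdot, m_i)$. The one arithmetic fact I need is that, just as $c_{m_i}(k)$ depends on $k$ only through $\gcd(k,m)$, the value $\gcd(k,m_i)$ also depends on $k$ only through $\gcd(k,m)$.

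First I would record the relevant gcd-associativity identity. Since $m_i \mid m$ we have $\gcd(m,m_i)=m_i$, and therefore
\[
\gcd(\gcd(k,m),m_i)=\gcd(k,\gcd(m,m_i))=\gcd(k,m_i)
\]
for every $i\in\{1,\ldots,r\}$. Substituting this into the definition \eqref{A_multi} rewrites the summand purely in terms of $\gcd(k,m)$:
\[
A(m_1,\ldots,m_r)=\frac1{m}\sum_{k=1}^m \gcd(\gcd(k,m),m_1)\cdots \gcd(\gcd(k,m),m_r).
\]

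Next I would group the terms of this sum according to the common value $\gcd(k,m)=d$, where $d$ ranges over the divisors of $m$. Writing $k=dj$ with $1\le j\le m/d$ and $\gcd(j,m/d)=1$, there are exactly $\phi(m/d)$ admissible indices $j$ for each such $d$, and on the whole class the summand equals $\gcd(d,m_1)\cdots\gcd(d,m_r)$. Collecting these contributions yields \eqref{sum_d_A}.

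I expect no genuine obstacle here: the argument is entirely routine, and the only step requiring a moment's care is the gcd-associativity identity, which is precisely the analogue of the identity $c_{m_i}(k)=c_{m_i}(\gcd(k,m))$ exploited in the proof of Proposition \ref{prop_sum_d_E}. The grouping by $\gcd(k,m)=d$ together with the count $\#\{1\le k\le m:\gcd(k,m)=d\}=\phi(m/d)$ is standard and completes the proof.
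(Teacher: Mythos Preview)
Your argument is correct and matches the paper's own proof essentially line for line: the paper also invokes $\gcd(k,m_i)=\gcd(\gcd(k,m),m_i)$ (from $m_i\mid m$) to rewrite the summand, and then groups the terms according to $\gcd(k,m)=d$ to pick up the factor $\phi(m/d)$.
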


\begin{proof} Similar to the proof of Proposition
\ref{prop_sum_d_E}. Using that $\gcd(k,m_i)= \gcd(\gcd(k,m),m_i)$
($1\le i\le r$) we have
\begin{equation*}
A(m_1,\ldots,m_r)= \frac1{m}\sum_{k=1}^m \gcd(\gcd(k,m),m_1)\cdots
\gcd(\gcd(k,m),m_r),
\end{equation*}
and by grouping the terms according to the values $\gcd(k,m)=d$ we
obtain the formula.
\end{proof}

\begin{cor} Let $p$ be a prime and let $a_1,\ldots,a_r\in \N$. Assume that $a_0:=0< a_1\le
a_2\le \ldots \le a_r$. Then
\begin{equation}
A(p^{a_1},\ldots,p^{a_r}) = p^{a_0+a_1+\ldots +a_{r-1}} +
\left(1-\frac1{p}\right) \sum_{\ell=1}^r p^{a_0+a_1+\ldots
+a_{\ell-1}}\sum_{j=a_{\ell-1}}^{a_{\ell}-1} p^{(r-\ell)j}.
\end{equation}
\end{cor}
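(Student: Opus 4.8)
The plan is to apply Proposition~\ref{prop_sum_d_A} directly with $m_i=p^{a_i}$. Since the exponents are nondecreasing, $m=\lcm[p^{a_1},\ldots,p^{a_r}]=p^{a_r}$, so the divisors $d\mid m$ in \eqref{sum_d_A} are exactly the powers $d=p^j$ with $0\le j\le a_r$. For such a $d$ one has $\gcd(p^j,p^{a_i})=p^{\min(j,a_i)}$ and $\phi(m/d)=\phi(p^{a_r-j})$, the latter being $1$ when $j=a_r$ and $(1-1/p)\,p^{a_r-j}$ when $0\le j\le a_r-1$. Writing $S(j):=\sum_{i=1}^r \min(j,a_i)$, substitution into \eqref{sum_d_A} gives
\[
A(p^{a_1},\ldots,p^{a_r})=\frac{1}{p^{a_r}}\sum_{j=0}^{a_r} p^{S(j)}\phi(p^{a_r-j}).
\]

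Next I would isolate the top term $j=a_r$. Here $\min(a_r,a_i)=a_i$ for every $i$, so $S(a_r)=a_1+\cdots+a_r$ and this term equals $p^{-a_r}\,p^{a_1+\cdots+a_r}=p^{a_1+\cdots+a_{r-1}}$, which (recalling $a_0=0$) is precisely the first summand $p^{a_0+a_1+\cdots+a_{r-1}}$ of the claim. For the remaining range $0\le j\le a_r-1$, factoring out $\phi(p^{a_r-j})=(1-1/p)\,p^{a_r-j}$ leaves a factor $(1-1/p)$ times $\sum_{j=0}^{a_r-1} p^{S(j)-j}$. Thus it remains to establish the identity
\[
\sum_{j=0}^{a_r-1} p^{S(j)-j}=\sum_{\ell=1}^r p^{a_1+\cdots+a_{\ell-1}}\sum_{j=a_{\ell-1}}^{a_\ell-1} p^{(r-\ell)j}.
\]

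The crux is a piecewise evaluation of $S(j)$. I would partition the index range $0\le j\le a_r-1$ into the consecutive blocks $a_{\ell-1}\le j\le a_\ell-1$ for $\ell=1,\ldots,r$. On the $\ell$-th block, $a_i\le a_{\ell-1}\le j$ for $i<\ell$ while $a_i\ge a_\ell>j$ for $i\ge\ell$, so $\min(j,a_i)=a_i$ when $i<\ell$ and $\min(j,a_i)=j$ when $i\ge\ell$; hence $S(j)=(a_1+\cdots+a_{\ell-1})+(r-\ell+1)j$ and therefore $S(j)-j=(a_1+\cdots+a_{\ell-1})+(r-\ell)j$. Summing $p^{S(j)-j}$ over each block and then over $\ell$ reproduces the right-hand side above, completing the argument. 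The only genuine subtlety is the bookkeeping of this partition: one must check that the blocks tile $\{0,1,\ldots,a_r-1\}$ without overlap, noting that a block is empty precisely when $a_{\ell-1}=a_\ell$ (so repeated exponents contribute nothing rather than causing double counting); once the piecewise formula for $\min(j,a_i)$ is fixed, the rest is a direct rearrangement.
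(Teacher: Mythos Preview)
Your proposal is correct and follows essentially the same approach as the paper: apply Proposition~\ref{prop_sum_d_A} with $m=p^{a_r}$, separate the term $j=a_r$, factor out $(1-1/p)$ from the remaining terms, and then evaluate $\sum_{j=0}^{a_r-1} p^{S(j)-j}$ by partitioning $\{0,\ldots,a_r-1\}$ into the blocks $a_{\ell-1}\le j\le a_\ell-1$ on which $S(j)$ is linear. Your explicit remark that blocks with $a_{\ell-1}=a_\ell$ are empty (so repeated exponents cause no double counting) is a useful clarification that the paper leaves implicit.
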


\begin{proof} Let $a:=a_r$. Then $\lcm[p^{a_1},\ldots,p^{a_r}]=p^a$. From \eqref{sum_d_A} we
obtain
\begin{equation*}
A(p^{a_1},\ldots,p^{a_r}) =\frac1{p^a} \sum_{j=0}^a
\gcd(p^j,p^{a_1}) \cdots \gcd(p^j,p^{a_r}) \phi(p^{a-j})
\end{equation*}
\begin{equation*}
= p^{a_0+a_1+\ldots +a_{r-1}} + \left(1-\frac1{p}\right)
\sum_{j=0}^{a-1} p^{\min(j,a_1)+\ldots +\min(j,a_r)-j},
\end{equation*}
where the last sum is
\begin{equation*}
\sum_{j=0}^{a_1-1} p^{rj-j}+ \sum_{j=a_1}^{a_2-1} p^{a_1+(r-1)j-j}+
\ldots  +\sum_{j=a_{r-1}}^{a_r-1} p^{a_1+a_2+\ldots +a_{r-1}+ 1j-j}
\end{equation*}
\begin{equation*}
=\sum_{\ell=1}^r \sum_{j=a_{\ell-1}}^{a_{\ell}-1} p^{a_0+a_1+\ldots
+a_{\ell-1}+ (r-\ell)j},
\end{equation*}
finishing the proof.
\end{proof}

\begin{appl} {\rm From \eqref{convo_repr_A} we have for any $m_1,\ldots, m_r\in \N$,
\begin{equation}
A(m_1,\ldots, m_r)\ge \sum_{d_1\mid m_1, \ldots, d_r\mid m_r}
\frac{\phi(d_1)\cdots \phi(d_r)}{d_1\cdots d_r} = \sum_{d_1\mid m_1}
\frac{\phi(d_1)}{d_1} \cdots \sum_{d_r\mid m_r}
\frac{\phi(d_r)}{d_r} = A(m_1)\cdots A(m_r),
\end{equation}
cf. \eqref{A_arith_mean}, with equality if $m_1,\ldots, m_r$ are
pairwise relatively prime.}
\end{appl}

Note that if $m_1=\ldots =m_r=m$, then from its definition,
\begin{equation}
A_r(m):= A(m,\ldots,m) =\frac1{m} \sum_{k=1}^m (\gcd(k,m))^r=
\frac1{m} \sum_{d\mid m} d^r\phi(m/d),
\end{equation}
which is a multiplicative function. An asymptotic formula for
$\sum_{m\le x} A_r(m)$ was given by Alladi \cite{All1975}. See also
\cite[Section 2]{Tot2010}.

A simple inequality for the functions $E$ and $A$ is given by

\begin{prop} For any $m_1,\ldots,m_r\in \N$,
\begin{equation}
E(m_1,\ldots,m_r)\le A(m_1,\ldots,m_r).
\end{equation}
\end{prop}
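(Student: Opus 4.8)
The plan is to compare the two functions factor by factor over a common averaging range. First I would recall the earlier Application showing that $E$ does not depend on $M$, so that one may take $M=m=\lcm[m_1,\ldots,m_r]$ in \eqref{def_func_E}. Then both quantities become averages of a product of $r$ factors over the same index set $k=1,\ldots,m$:
\[
E(m_1,\ldots,m_r)= \frac1m\sum_{k=1}^m c_{m_1}(k)\cdots c_{m_r}(k), \qquad A(m_1,\ldots,m_r)= \frac1m\sum_{k=1}^m \gcd(k,m_1)\cdots \gcd(k,m_r).
\]

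The only ingredient I need is the inequality $|c_n(k)|\le \gcd(k,n)$ recorded in Section \ref{section_prelim}, valid for all $k,n\in\N$. Since each Ramanujan sum is real, for every fixed $k$ the product $c_{m_1}(k)\cdots c_{m_r}(k)$ is a real number, and applying the bound to each of the $r$ factors gives
\[
c_{m_1}(k)\cdots c_{m_r}(k) \le |c_{m_1}(k)|\cdots |c_{m_r}(k)| \le \gcd(k,m_1)\cdots \gcd(k,m_r).
\]
Summing this over $k=1,\ldots,m$ and dividing by $m$ yields $E(m_1,\ldots,m_r)\le A(m_1,\ldots,m_r)$, which is exactly the claim.

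There is essentially no obstacle here; the proof is a one-line termwise estimate. The only point requiring a moment's care is the passage to absolute values, and this is legitimate precisely because the Ramanujan sums are real, so each summand is a real number bounded above by its own modulus. As an alternative route (which avoids re-justifying $M=m$) I could instead start from the representations in Propositions \ref{prop_sum_d_E} and \ref{prop_sum_d_A}, using $\phi(m/d)\ge 0$ for every $d\mid m$ together with the same factorwise bound $|c_{m_i}(d)|\le\gcd(d,m_i)$; the argument is identical in spirit.
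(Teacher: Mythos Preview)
Your proof is correct and follows exactly the paper's approach: the paper also uses the termwise bound $|c_n(k)|\le \gcd(k,n)$ applied to each factor in the sum defining $E$ (with $M=m$), obtaining $E(m_1,\ldots,m_r)\le \frac1m\sum_{k=1}^m |c_{m_1}(k)|\cdots|c_{m_r}(k)|\le A(m_1,\ldots,m_r)$. Your write-up is simply a bit more explicit about why $M=m$ is permissible and why the passage to absolute values is valid.
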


\begin{proof} Using the inequality $|c_n(k)|\le \gcd(k,n)$, mentioned in the
Introduction, we obtain
\begin{equation*}
E(m_1,\ldots,m_r)\le \frac1{m} \sum_{k=1}^m |c_{m_1}(k)|\cdots
|c_{m_r}(k)| \le \frac1{m} \sum_{k=1}^n \gcd(k,m_1) \cdots
\gcd(k,m_r)= A(m_1,\ldots,m_r).
\end{equation*}
\end{proof}

\section{Generalizations}

Let $f\in {\cal F}_2$ be a function of two variables and consider
the function
\begin{equation}
F_f(m_1,\ldots,m_r):= \frac1{m} \sum_{k=1}^m f(k,m_1)\cdots
f(k,m_r).
\end{equation}

\begin{prop} {\rm (\cite[Lemma 5]{Lis2010})} If $n\mapsto f(k,n)$ is
multiplicative for any $k\in \N$ and $k\mapsto f(k,n)$ is periodic
(mod $n$) for any $n\in \N$, then $F_f$ is multiplicative.
\end{prop}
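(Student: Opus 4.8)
The plan is to verify the multiplicativity condition straight from the definition. Fix $m_1,\ldots,m_r,n_1,\ldots,n_r\in\N$ with $\gcd(m_1\cdots m_r,n_1\cdots n_r)=1$ and set $m:=\lcm[m_1,\ldots,m_r]$, $n:=\lcm[n_1,\ldots,n_r]$. First I would record the two consequences of the coprimality hypothesis: since $m\mid m_1\cdots m_r$ and $n\mid n_1\cdots n_r$ we get $\gcd(m,n)=1$, and since each $m_i$ is coprime to each $n_j$, the multiplicativity of the lcm function of several variables gives $\lcm[m_1n_1,\ldots,m_rn_r]=mn$. Thus the averaging denominator for the combined tuple is exactly $mn$.

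Next, using that $n\mapsto f(k,n)$ is multiplicative together with $\gcd(m_i,n_i)=1$, I would factor each term as $f(k,m_in_i)=f(k,m_i)\,f(k,n_i)$, so that
\[
F_f(m_1n_1,\ldots,m_rn_r)=\frac1{mn}\sum_{k=1}^{mn}\prod_{i=1}^r f(k,m_i)\,f(k,n_i).
\]

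The heart of the argument is to decouple this sum via the Chinese Remainder Theorem. Because $\gcd(m,n)=1$, as $k$ runs through a complete residue system modulo $mn$ the pair $(k\bmod m,\,k\bmod n)$ runs exactly once through $\Z_m\times\Z_n$. The periodicity hypothesis is precisely what makes this usable: since $k\mapsto f(k,m_i)$ is periodic modulo $m_i$ and $m_i\mid m$, the value $f(k,m_i)$ depends only on $k\bmod m$, and likewise $f(k,n_i)$ depends only on $k\bmod n$. Writing $k\leftrightarrow(a,b)$ under the CRT bijection, the product therefore separates and the sum factors as
\[
\sum_{k=1}^{mn}\prod_{i=1}^r f(k,m_i)\,f(k,n_i)=\Bigl(\sum_{a=1}^{m}\prod_{i=1}^r f(a,m_i)\Bigr)\Bigl(\sum_{b=1}^{n}\prod_{i=1}^r f(b,n_i)\Bigr).
\]

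Dividing by $mn$ then yields $F_f(m_1n_1,\ldots,m_rn_r)=F_f(m_1,\ldots,m_r)\,F_f(n_1,\ldots,n_r)$. To complete the verification of the definition I would note that $F_f$ is nonzero: multiplicativity of $n\mapsto f(k,n)$ forces $f(k,1)=1$, whence $F_f(1,\ldots,1)=f(1,1)^r=1$. The step requiring the most care is the joint use of periodicity and the CRT, namely checking that each factor really is a function of the correct residue class so that the single sum over $\Z_{mn}$ genuinely separates into the product of the two sums over $\Z_m$ and $\Z_n$; once that decoupling is justified the remainder is routine bookkeeping.
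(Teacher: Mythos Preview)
Your argument is correct. The paper itself does not supply a proof of this proposition; it merely quotes the result from \cite[Lemma~5]{Lis2010}, so there is no ``paper's own proof'' to compare against. Your approach---factor $f(k,m_in_i)=f(k,m_i)f(k,n_i)$ using multiplicativity in the second variable, then split the sum over $\Z_{mn}$ into a product over $\Z_m\times\Z_n$ via the Chinese Remainder Theorem, invoking periodicity to see that each factor depends only on the appropriate residue---is exactly the natural one and is essentially how Liskovets proves it as well. The auxiliary observations ($\gcd(m,n)=1$, $\lcm[m_1n_1,\ldots,m_rn_r]=mn$, and $F_f(1,\ldots,1)=1$) are all correctly justified.
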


Now suppose that $f$ has the following representation:
\begin{equation} \label{gen_Apostol_type}
f(k,n) = \sum_{d\mid \gcd(k,n)} g(d)h(n/d) \quad (k,n\in \N)
\end{equation}
where $g,h \in {\cal F}_1$ are arbitrary functions. Functions $f$
defined in this way, as generalizations of the Ramanujan sums, were
investigated in \cite{AndApo1953, Apo1972}. See also Apostol
\cite[Section 8.3]{Apo1976}.

\begin{prop} Assume that $f$ is given by \eqref{gen_Apostol_type}.
Then

i) $F_f$ has the representations
\begin{equation} \label{convo_repr_gen}
F_f(m_1,\ldots,m_r)= \sum_{d_1\mid m_1, \ldots, d_r\mid m_r}
\frac{g(d_1)\cdots g(d_r)}{\lcm[d_1,\ldots, d_r]} h(m_1/d_1)\cdots
h(m_r/d_r),
\end{equation}
\begin{equation} \label{divisor_repr_gen}
F_f(m_1,\ldots,m_r)= \frac1{m} \sum_{d\mid m} f(\gcd(d,m_1)) \cdots
f(\gcd(d,m_r)) \phi(m/d).
\end{equation}

ii) If $g$ and $h$ are multiplicative functions, then $F_f$ is
multiplicative (as a function of several variables).
\end{prop}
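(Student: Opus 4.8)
The plan is to establish both formulas by directly generalizing the proofs already given for the special cases $E$ (where $g=\id_1$, $h=\mu$) and $A$ (where $g=\phi$, $h=\1$). The two representations come from the two different ways of manipulating the defining sum, exactly as in Propositions~\ref{prop_convo_repr_E} and~\ref{prop_sum_d_E}.

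For part (i), formula~\eqref{convo_repr_gen}, I would mimic the proof of Proposition~\ref{prop_convo_repr_E}. First I would substitute the representation~\eqref{gen_Apostol_type} into the definition of $F_f$, writing each factor $f(k,m_i)$ as $\sum_{d_i\mid \gcd(k,m_i)} g(d_i) h(m_i/d_i)$. Interchanging the order of summation, I would pull the sum over $d_1,\ldots,d_r$ (with $d_i\mid m_i$) to the outside, leaving an inner sum $\sum_{1\le k\le m,\ d_1\mid k,\ldots,d_r\mid k} 1$, which equals $m/\lcm[d_1,\ldots,d_r]$ since the simultaneous divisibility conditions are equivalent to $\lcm[d_1,\ldots,d_r]\mid k$. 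Dividing by $m$ then yields~\eqref{convo_repr_gen} directly. Here one must note that the divisibility $d_i\mid m_i$ (and hence $\lcm[d_1,\ldots,d_r]\mid m$) is what guarantees the inner count is exactly $m/\lcm[d_1,\ldots,d_r]$.

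For formula~\eqref{divisor_repr_gen}, I would follow the proof of Proposition~\ref{prop_sum_d_E}. The key structural fact needed is that $f(k,n)$ depends on $k$ only through $\gcd(k,n)$, i.e.\ $f(k,n)=f(\gcd(k,n),n)$; this is immediate from~\eqref{gen_Apostol_type} because $\gcd(\gcd(k,n),n)=\gcd(k,n)$, so the inner divisor sum is unchanged. Combined with $\gcd(\gcd(k,m),m_i)=\gcd(k,m_i)$ (valid since $m_i\mid m$), this lets me replace each $f(k,m_i)$ by $f(\gcd(k,m),m_i)$, after which I group the terms of $\sum_{k=1}^m$ according to the value $d=\gcd(k,m)$. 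For each divisor $d\mid m$ the number of $k$ with $\gcd(k,m)=d$ is $\phi(m/d)$, giving~\eqref{divisor_repr_gen}. I should flag that the notation $f(\gcd(d,m_i))$ in~\eqref{divisor_repr_gen} is a slight abuse for $f(\gcd(d,m_i),m_i)$; I would state the formula with the second argument restored or note this convention.

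For part (ii), I would argue exactly as in Application~3: when $g$ and $h$ are multiplicative as one-variable functions, the two-variable functions $(m_1,\ldots,m_r)\mapsto g(d_i)$-type factors assemble into multiplicative building blocks. Concretely, the function $(d_1,\ldots,d_r)\mapsto \frac{g(d_1)\cdots g(d_r)}{\lcm[d_1,\ldots,d_r]}$ is multiplicative because $\lcm$ is multiplicative (as noted in Section~\ref{section_prelim}) and products of multiplicative one-variable functions in separate variables are multiplicative; likewise $(m_1,\ldots,m_r)\mapsto h(m_1)\cdots h(m_r)$ is multiplicative. Since~\eqref{convo_repr_gen} exhibits $F_f$ as the convolution~\eqref{convo_functions} of these two multiplicative functions, and convolution preserves multiplicativity (stated in Section~\ref{section_prelim}), $F_f$ is multiplicative. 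The only mild obstacle is verifying that dividing by $\lcm$ keeps the first building block multiplicative, but this is precisely the multiplicativity of $\lcm$ together with the multiplicativity of the numerator, so the argument is routine.
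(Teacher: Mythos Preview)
Your proposal is correct and follows exactly the approach the paper itself takes: the paper's proof of part (i) simply says ``Follows by the same arguments as in the proofs of Propositions~\ref{prop_convo_repr_E}, \ref{prop_sum_d_E}, \ref{prop_convo_repr_A} and \ref{prop_sum_d_A}'', and for part (ii) ``Direct consequence of~\eqref{convo_repr_gen}'', which is precisely what you have spelled out. Your remark that $f(\gcd(d,m_i))$ in~\eqref{divisor_repr_gen} should really be read as $f(\gcd(d,m_i),m_i)$ (equivalently $f(d,m_i)$) is a genuine and correct observation about a notational slip in the stated formula.
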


\begin{proof} i) Follows by the same arguments as in the proofs of
Propositions \ref{prop_convo_repr_E}, \ref{prop_sum_d_E},
\ref{prop_convo_repr_A} and \ref{prop_sum_d_A}.

ii) Direct consequence of \eqref{convo_repr_gen}.
\end{proof}

The functions $E$ and $A$ are recovered choosing $g=\id$, $h=\mu$
and $g=\phi$, $h=\1$, respectively, where note that
$\gcd(k,n)=\sum_{d\mid \gcd(k,n)} \phi(d)$.

Now let $f(k,n)=\overline{f}(\gcd(k,n))$ ($k,n\in \N$), where
$\overline{f}\in {\cal F}_1$ is an arbitrary function. Then $f$ is
of type \eqref{gen_Apostol_type} with $g=\overline{f}*\mu$, $h=\1$,
since $\overline{f}(\gcd(k,n))=\sum_{d\mid \gcd(k,n)}
(\overline{f}*\mu)(d)$.

Special choices of $\overline{f}$ can be considered. For
$\overline{f}=\id$ we reobtain the function $A$. As another example,
let $\overline{f}=\tau$, with $g=h=\1$. Then we obtain

\begin{cor} The function $F_{\tau}\in {\cal F}_r$ is multiplicative and
\begin{equation} \label{convo_repr_gen_tau}
F_{\tau}(m_1,\ldots,m_r)= \sum_{d_1\mid m_1, \ldots, d_r\mid m_r}
\frac1{\lcm[d_1,\ldots, d_r]},
\end{equation}
\begin{equation} \label{divisor_repr_gen_tau}
F_f(m_1,\ldots,m_r)= \frac1{m} \sum_{d\mid m} \tau(\gcd(d,m_1))
\cdots \tau(\gcd(d,m_r)) \phi(m/d).
\end{equation}
\end{cor}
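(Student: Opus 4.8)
The plan is to obtain this corollary as a direct specialization of the preceding Proposition, since $F_{\tau}$ arises precisely from the choice $\overline{f}=\tau$ in the construction $f(k,n)=\overline{f}(\gcd(k,n))$. First I would record that this $f$ is of Apostol type \eqref{gen_Apostol_type} with $h=\1$ and $g=\overline{f}*\mu=\tau*\mu$, as noted in the discussion preceding the corollary. Since $\tau=\1*\1$ and $\1*\mu=\varepsilon$, associativity of the Dirichlet convolution gives $\tau*\mu=\1*\1*\mu=\1*\varepsilon=\1$, so in fact $g=h=\1$. Equivalently, one checks directly that $\sum_{d\mid \gcd(k,n)}\1(d)\1(n/d)=\tau(\gcd(k,n))$, since the left-hand side merely counts the divisors of $\gcd(k,n)$; this confirms the representation with $g=h=\1$.

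With $g=h=\1$ in hand, both displayed identities follow by substitution into the identities already proved in the preceding Proposition. Plugging $g(d_i)=1$ and $h(m_i/d_i)=1$ into \eqref{convo_repr_gen} collapses every factor in the numerator to $1$ and yields \eqref{convo_repr_gen_tau}. For the divisor-sum formula \eqref{divisor_repr_gen_tau}, I would use the defining relation $f(k,n)=\tau(\gcd(k,n))$, so that $f(d,m_i)=\tau(\gcd(d,m_i))$ for each $i$, and substitute this into the divisor representation \eqref{divisor_repr_gen}.

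Finally, multiplicativity is immediate from part ii) of the preceding Proposition: both $g=\1$ and $h=\1$ are multiplicative as functions of one variable, hence $F_{\tau}$ is multiplicative as a function of several variables, placing it in $\mathcal{F}_r$ as a genuine multiplicative object determined by its values at prime-power arguments.

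As for obstacles, there is essentially none: the statement reduces entirely to substitution into identities established earlier in the section. The only point requiring an explicit line of verification is the Dirichlet-convolution identity $\tau*\mu=\1$, which pins down $g=\1$ and thereby makes the numerators in \eqref{convo_repr_gen} trivial; this is a one-step computation from $\tau=\1*\1$ and $\1*\mu=\varepsilon$.
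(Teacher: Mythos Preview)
Your proposal is correct and follows exactly the route the paper intends: the corollary is stated immediately after the remark ``let $\overline{f}=\tau$, with $g=h=\1$,'' and is meant as a direct substitution into the preceding Proposition, which is precisely what you carry out. Your explicit verification that $\tau*\mu=\1$ (pinning down $g=\1$) is the only step the paper leaves implicit, and you handle it correctly.
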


Further common generalizations of the functions $E$ and $A$ can be
given using $r$-even functions. See \cite{McC1986,SchSpi1994,TotHau}
for their definitions and properties.

\section{Linear congruences with constraints}

A direct generalization of the interpretation of $E$ given in the
Introduction is the following. Let $M\in \N$ and let ${\cal D}_k(M)$
($1\le k\le r)$ be arbitrary nonempty subsets of the set of
(positive) divisors of $M$. For an integer $n$ let $N_n(M,{\cal
D}_1,\ldots,{\cal D}_r)$ stand for the number of solutions
$(x_1,\ldots,x_r) \in \Z^r_M$ of the congruence
\begin{equation} \label{gen_lin_eq}
x_1+\ldots +x_r \equiv n \text{ (mod $M$)}
\end{equation}
satisfying $\gcd(x_1,M)\in {\cal D}_1,\ldots,\gcd(x_r,M)\in {\cal
D}_r$.

If ${\cal D}_k=\{M/m_k\}$ ($1\le k\le r)$ and $n=0$, then we
reobtain the function $E$.

Special cases of the function $N_n(M,{\cal D}_1,\ldots,{\cal D}_r)$
were investigated earlier by several authors.

The case ${\cal D}_k=\{1\}$, i.e., $\gcd(x_k,M)=1$ ($1\le k\le r)$
was considered for the first time by Rademacher \cite{Rad1925} in
1925 and Brauer \cite{Bra1926} in 1926. It was recovered by Nicol
and Vandiver \cite{NicVan1954} in 1954, Cohen \cite{Coh1955} in
1955, Rearick \cite{Rea1963} in 1963, and others. The case $r=2$ was
treated by Alder \cite{Ald1958} in 1958.

The general case of arbitrary subsets ${\cal D}_k$ was investigated,
among others, by McCarthy \cite {McC1975} in 1975 and by Spilker
\cite{Spi1996} in 1996. One has
\begin{equation} \label{formula_gen_lin_eq}
N_n(M,{\cal D}_1,\ldots,{\cal D}_r)=\frac1{M}\sum_{d\mid M}
c_{M/d}(n) \prod_{i=1}^r \sum_{e\in {\cal D}_i(M)} c_{M/e}(d).
\end{equation}

For $M=m$, ${\cal D}_k=\{m/m_k\}$ ($1\le k\le r)$ and $n=0$
\eqref{formula_gen_lin_eq} reduces to our Proposition
\ref{prop_sum_d_E}.

The proof of formula \eqref{formula_gen_lin_eq} given in
\cite[Section 4]{Spi1996}, see also \cite[Ch.\ 3]{McC1986}, uses
properties of $r$-even functions, Cauchy products and
Ramanujan--Fourier expansions of functions.

See Chapter 3 of the book of McCarthy \cite{McC1986} for a survey of
this topic.

It is well known that the number of solutions of polynomial
congruences can be expressed using exponential sums, see for ex.
\cite[Th.\ 1.31]{Nar1983}. Although the obtained expression can be
easily transformed by means of Ramanujan sums in case of linear
congruences with side conditions, this is not used in the literature
cited in this Section.

In what follows we give a simple direct proof of
\eqref{formula_gen_lin_eq} in the case ${\cal D}_k(M)=\{d_k\}$
($1\le k\le r$), applying the method of above.

\begin{prop} \label{prop_general_cong} Let $M\in \N$, $n\in \Z$ and let $d_1,\ldots,d_r\mid M$. Then
\begin{equation}
N_n(M,\{d_1\},\ldots,\{d_r\})= \frac1{M} \sum_{k=1}^M c_{M/d_1}(k)
\cdots c_{M/d_r}(k) \exp(-2\pi ikn/M)
\end{equation}
\begin{equation}
= \frac1{M} \sum_{\delta \mid M} c_{M/d_1}(\delta)\cdots
c_{M/d_r}(\delta) c_{M/\delta}(n).
\end{equation}
\end{prop}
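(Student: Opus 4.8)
I want to count solutions
$(x_1,\ldots,x_r)\in\Z_M^r$ of $x_1+\cdots+x_r\equiv n\pmod M$
subject to $\gcd(x_k,M)=d_k$ for each $k$. The standard device is to
detect the congruence by an additive character. The plan is to write
the indicator of the event ``$x_1+\cdots+x_r\equiv n\pmod M$'' as the
average
\begin{equation*}
\frac1M\sum_{k=1}^M \exp\bigl(2\pi i k(x_1+\cdots+x_r-n)/M\bigr),
\end{equation*}
which equals $1$ when the congruence holds and $0$ otherwise. First I
would substitute this into the definition of
$N_n(M,\{d_1\},\ldots,\{d_r\})$ and interchange the order of
summation, so that the outer sum is over $k$ and the inner sums run
over each $x_j\in\Z_M$ with the constraint $\gcd(x_j,M)=d_j$. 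This
separates the variables: the summand factors as
$\exp(-2\pi ikn/M)\prod_{j=1}^r\bigl(\sum_{x_j}\exp(2\pi ikx_j/M)\bigr)$,
with each inner sum taken over those residues $x_j$ with
$\gcd(x_j,M)=d_j$.

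**Identifying the inner sums as Ramanujan sums.** The key step is to
recognize each inner sum as a Ramanujan sum. Writing $x_j=d_j y_j$,
the condition $\gcd(x_j,M)=d_j$ becomes $\gcd(y_j,M/d_j)=1$ with
$y_j$ running over a complete residue system mod $M/d_j$, so
\begin{equation*}
\sum_{\substack{x_j\in\Z_M\\ \gcd(x_j,M)=d_j}}\exp(2\pi ikx_j/M)
=\sum_{\substack{1\le y_j\le M/d_j\\ \gcd(y_j,M/d_j)=1}}
\exp\bigl(2\pi i ky_j/(M/d_j)\bigr)
=c_{M/d_j}(k),
\end{equation*}
directly by the definition \eqref{Ramanujan_sum}. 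This yields the
first displayed equality of the proposition.

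**Passing to the second form.** For the second expression I would
group the terms of the sum over $k$ according to $\gcd(k,M)=\delta$,
where $\delta\mid M$. Since $n\mapsto c_{M/d_j}(k)$ depends on $k$
only through $\gcd(k,M)$ in the relevant sense—more precisely, using
the representation \eqref{Ramanujan_repr} one has
$c_{M/d_j}(k)=c_{M/d_j}(\gcd(k,M/d_j))$ and, as in the proof of
Proposition \ref{prop_sum_d_E}, $\gcd(k,M/d_j)$ is governed by
$\gcd(k,M)=\delta$ because $M/d_j\mid M$—the product
$c_{M/d_1}(k)\cdots c_{M/d_r}(k)$ is constant on each class
$\gcd(k,M)=\delta$ and equals $c_{M/d_1}(\delta)\cdots c_{M/d_r}(\delta)$.
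The remaining factor is $\sum_{\gcd(k,M)=\delta}\exp(-2\pi ikn/M)$,
which after writing $k=\delta\ell$ with $\gcd(\ell,M/\delta)=1$ is
exactly $c_{M/\delta}(n)$ by \eqref{Ramanujan_sum}. Collecting these
gives the second equality.

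**Main obstacle.** The only delicate point is the reduction
$c_{M/d_j}(k)=c_{M/d_j}(\gcd(k,M))$: one must be careful that the
modulus of the Ramanujan sum is $M/d_j$ rather than $M$, so the
argument from Proposition \ref{prop_sum_d_E} must be invoked with the
divisibility $M/d_j\mid M$ to ensure that grouping by $\gcd(k,M)$ is
legitimate. Once that is verified, the rest is a routine reindexing,
so I expect no serious difficulty beyond bookkeeping.
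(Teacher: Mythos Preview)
Your proposal is correct and follows essentially the same route as the paper's proof: detect the congruence by the orthogonality relation $\frac1{M}\sum_{k=1}^M\exp(2\pi ikt/M)=\varepsilon(M\mid t)$, interchange summations, substitute $x_j=d_jy_j$ to identify each inner sum as $c_{M/d_j}(k)$, and then group by $\gcd(k,M)=\delta$ using $c_{M/d_j}(k)=c_{M/d_j}(\gcd(k,M))$ exactly as in Proposition~\ref{prop_sum_d_E}. The only cosmetic point you glossed over is that $\sum_{\gcd(k,M)=\delta}\exp(-2\pi ikn/M)=c_{M/\delta}(-n)$, which equals $c_{M/\delta}(n)$ since Ramanujan sums are real; the paper is equally terse here.
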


\begin{proof} Only the simple fact
\begin{equation} \label{exp_sum}
\sum_{k=1}^M \exp(2\pi ikn/M)=\begin{cases} M, \  & M\mid n, \\  0,
& M\nmid n. \end{cases}
\end{equation}
and the definition of Ramanujan sums are required. By the definition
of $N_n(M,\{d_1\},\ldots,\{d_r\})$,
\begin{equation*}
N:= N_n(M,\{d_1\},\ldots,\{d_r\}) = \frac1{M} \sum_{\substack{1\le x_1\le M\\
\gcd(x_1,M)=d_1}} \cdots \sum_{\substack{1\le x_r\le M\\
\gcd(x_1,M)=d_r}} \sum_{k=1}^n  \exp(2\pi ik(x_1+\ldots +x_r-n)/M)
\end{equation*}
\begin{equation*}
= \frac1{M} \sum_{k=1}^M  \exp(-2\pi ikn/M) \sum_{\substack{1\le x_1\le M\\
\gcd(x_1,M)=d_1}} \exp(2\pi ik x_1/M) \cdots \sum_{\substack{1\le x_r\le M\\
\gcd(x_r,M)=d_r}} \exp(2\pi ik x_r/M),
\end{equation*}
and denoting $x_k=d_ky_k$, $\gcd(y_k,M/d_k)=1$ ($1\le k\le M$),
\begin{equation*}
N = \frac1{M} \sum_{k=1}^M \exp(-2\pi ikn/M) \sum_{\substack{1\le y_1\le M/d_1\\
\gcd(y_1,M/d_1)=1}} \exp(2\pi ik y_1/(M/d_1)) \cdots \sum_{\substack{1\le y_r\le M/d_r\\
\gcd(y_r,M/d_r)=1}} \exp(2\pi ik y_r/(M/d_r))
\end{equation*}
\begin{equation*}
= \frac1{M} \sum_{k=1}^M \exp(-2\pi ikn/M) c_{M/d_1}(k)\cdots
c_{M/d_r}(k)
\end{equation*}

To obtain the second formula use that $c_{M/d_i} (k)=
c_{M/d_i}(\gcd(k,M))$ and group the terms according to the values of
$\gcd(k,M)=\delta$, cf. proof of Proposition \ref{prop_sum_d_E}.
\end{proof}

For $n=0$ and $d_k=M/m_k$ ($1\le k\le r$) we reobtain the
interpretation of the function $E$ and formula
\eqref{another_repr_E}. We remark that the proof of Proposition
\ref{prop_general_cong} is a slight simplification of the proof of
\cite[Lemma 4.1]{MedNed2006}.

\end{document}